\title{On inversion of adjunction}
\author{Osamu Fujino and Kenta Hashizume}
\date{2021/7/5, version 0.08}
\subjclass[2010]{Primary 14E30; Secondary 14N30}
\keywords{adjunction, inversion of adjunction, minimal model program}
\address{Osamu Fujino \\ Department of 
Mathematics, Graduate School of Science, 
Kyoto University, Kyoto 606-8502, Japan}
\email{fujino@math.kyoto-u.ac.jp}
\address{Kenta Hashizume \\ Graduate School of Mathematical Sciences, 
The University of Tokyo, 3-8-1 Komaba Meguro-ku Tokyo 153-8914, Japan}
\email{hkenta@ms.u-tokyo.ac.jp}
\DeclareMathOperator{\Supp}{Supp}
\DeclareMathOperator{\Spec}{Spec}
\DeclareMathOperator{\Nlc}{Nlc}
\DeclareMathOperator{\Exc}{Exc}
\newtheorem{thm}{Theorem}[section]
\newtheorem{lem}[thm]{Lemma}
\newtheorem*{claim}{Claim}
\theoremstyle{definition}
\newtheorem{step}{Step}
\newtheorem{defn}[thm]{Definition}
\newtheorem{rem}[thm]{Remark}
\newtheorem*{ack}{Acknowledgments}  
\begin{document}

\maketitle 

\begin{abstract}
We first announce our recent result on adjunction and 
inversion of adjunction. 
Then we clarify the relationship between 
our inversion of 
adjunction and Hacon's inversion 
of adjunction for log canonical centers of arbitrary codimension. 
\end{abstract}

\section{Introduction}

In \cite{fujino-hashizume2}, we established the following 
adjunction and inversion of adjunction for 
log canonical centers of arbitrary codimension in full generality. 

\begin{thm}[Adjunction and Inversion of Adjunction, 
see \cite{fujino-hashizume2}]\label{f-thm1.1}
Let $X$ be a normal 
variety and let $\Delta$ be an effective $\mathbb R$-divisor 
on $X$ such that 
$K_X+\Delta$ is $\mathbb R$-Cartier. 
Let $W$ be a log canonical center of $(X, \Delta)$ 
and let $\nu\colon Z\to W$ be the normalization of $W$. 
Then there exist a b-potentially nef $\mathbb R$-b-divisor 
$\mathbf M$ and an $\mathbb R$-b-divisor $\mathbf B$ 
on $Z$ such that 
$\mathbf B_Z$ is effective with 
$$
\nu^*(K_X+\Delta)=\mathbf K_Z+\mathbf M_Z+\mathbf B_Z. 
$$
More precisely, there exists a projective birational morphism 
$p\colon Z'\to Z$ from a smooth quasi-projective variety 
$Z'$ such that 
\begin{itemize}
\item[(i)] $\mathbf M=\overline {\mathbf M_{Z'}}$ and 
$\mathbf M_{Z'}$ is a potentially nef $\mathbb R$-divisor on $Z'$, 
\item[(ii)] $\mathbf K+\mathbf B=\overline {\mathbf K_{Z'}+
\mathbf B_{Z'}}$, 
\item[(iii)] $\Supp \mathbf B_{Z'}$ is a simple 
normal crossing divisor on $Z'$, 
\item[(iv)] $\nu\circ p\left(\mathbf B^{>1}_{Z'}\right)=W\cap \Nlc(X, \Delta)$ 
holds set theoretically, and 
\item[(v)] $\nu\circ p\left(\mathbf B^{\geq 1}_{Z'}\right)=W
\cap \left(\Nlc(X, \Delta)\cup \bigcup _{W\not\subset W^\dag}W^\dag
\right)$, where $W^\dag$ runs over log canonical centers of $(X, 
\Delta)$ which do not contain $W$, holds set theoretically.  
\end{itemize}
\end{thm}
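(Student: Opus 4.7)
The plan is to realize the normalization $\nu\colon Z\to W$ as the base of a sub-lc-trivial fibration coming from a log resolution of $(X,\Delta)$, and then to apply the canonical bundle formula of Ambro--Fujino.

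First, I would take a log resolution $f\colon Y\to X$ of $(X,\Delta)$, write $K_Y+\Gamma=f^{*}(K_X+\Delta)$, and choose a minimal lc stratum $T$ of the pair $(Y,\Gamma^{\le 1})$ among those dominating $W$. Working with $\Gamma^{\le 1}$ is necessary because $\Gamma$ may have coefficients exceeding $1$ over $\Nlc(X,\Delta)$. After Stein-factoring $T\to W$ through $Z$, iterated divisorial adjunction along the components of $\Gamma^{=1}$ whose intersection is $T$ produces a boundary $\Gamma_T$ with $K_T+\Gamma_T=(K_Y+\Gamma)|_T$, and $(T,\Gamma_T)$ is sub-lc outside the preimage of $\Nlc(X,\Delta)$.

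Applying the canonical bundle formula to the induced morphism $g\colon T\to Z$ then yields b-divisors $\mathbf{M}$ (the moduli part) and $\mathbf{B}$ (the discriminant part) on $Z$ with
\[
K_T+\Gamma_T\sim_{\mathbb{R}}g^{*}(\mathbf{K}_Z+\mathbf{M}_Z+\mathbf{B}_Z).
\]
Combining this with the restriction identity $(K_Y+\Gamma)|_T=g^{*}\nu^{*}(K_X+\Delta)$ gives the decomposition $\nu^{*}(K_X+\Delta)=\mathbf{K}_Z+\mathbf{M}_Z+\mathbf{B}_Z$. Taking $p\colon Z'\to Z$ to be a smooth model on which $\mathbf{M}$ descends to a potentially nef $\mathbb{R}$-divisor --- available by the Ambro--Fujino positivity theorem --- and which simultaneously makes $\Supp\mathbf{B}_{Z'}$ simple normal crossing, secures (i), (ii), and (iii). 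Effectivity of $\mathbf{B}_Z$ reflects the minimality of $T$ among lc strata dominating $W$, which rules out negative contributions to the discriminant on the sub-lc locus, together with the fact that over $\nu^{-1}(\Nlc(X,\Delta))$ the discriminant coefficients exceed $1$ and are therefore positive.

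The set-theoretic identifications in (iv) and (v) are where I expect the main obstacle to lie. By the very definition of the discriminant, the coefficient of $\mathbf{B}_{Z'}$ along a prime $D\subset Z'$ exceeds $1$ exactly when $(T,\Gamma_T)$ fails to be sub-lc over the generic point of $p(D)$, which by construction happens precisely over $\nu^{-1}(\Nlc(X,\Delta))$; pushing forward under $\nu\circ p$ yields (iv). For (v), the extra locus where the coefficient equals $1$ comes from horizontal components of $\Gamma_T^{=1}$ on $T$ that map onto divisors of $Z'$. Each such component is an lc stratum of $(Y,\Gamma^{\le 1})$ strictly larger than $T$, and its image under $f$ is an lc center $W^{\dag}$ of $(X,\Delta)$ with $W\not\subset W^{\dag}$. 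The principal technical task, and the main bookkeeping difficulty, is to verify that this correspondence between lc centers $W^{\dag}\not\supset W$ of $(X,\Delta)$ and divisorial components of $\mathbf{B}_{Z'}^{\ge 1}\setminus\mathbf{B}_{Z'}^{>1}$ is exhaustive, independent of the choices of $f$ and $T$, and picks out the asserted set on the nose rather than up to finitely many stray components.
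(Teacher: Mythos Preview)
Your overall plan---extract an lc stratum over $W$, run a canonical bundle formula, and read off $\mathbf B$ and $\mathbf M$---is in the right spirit, but it diverges from the paper's route in a way that matters. The paper does not fix a single minimal stratum $T$ and invoke the Ambro--Fujino formula for lc-trivial fibrations; instead it builds a quasi-log scheme structure on $Z$ and applies the theory of \emph{basic $\mathbb R$-slc-trivial fibrations}, where the source is allowed to be reducible and semi-log-canonical. This is not cosmetic: $\mathbf B$ in Theorem~\ref{f-thm1.1} is defined (Definition~\ref{f-def2.1}) via an infimum over \emph{all} exceptional divisors of discrepancy $-1$ with center $W$, and your single-$T$ construction is a priori closer to Hacon's $\mathbf B(W;X,\Delta)$. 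That the two coincide is exactly Theorem~\ref{f-thm1.2}, which needs its own MMP/connectedness argument and is not a formality.

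The more serious gap is in your handling of (iv). You write that $(T,\Gamma_T)$ fails to be sub-lc over a prime $D\subset Z'$ ``precisely over $\nu^{-1}(\Nlc(X,\Delta))$'', but only one inclusion is immediate. If $(X,\Delta)$ is not log canonical at some $x\in W$, then on your resolution there is a component of $\Gamma^{>1}$ over $x$; nothing in your construction forces that component to \emph{meet} the chosen stratum $T$, so there is no automatic reason the discriminant on $Z'$ acquires a coefficient $>1$ over $x$. Proving that the non-lc locus of $(X,\Delta)$ is genuinely detected by $\mathbf B_{Z'}^{>1}$ is the hard inversion direction, and the paper explicitly isolates (iv) as the step requiring the minimal model program for log canonical pairs (\cite{fujino-hashizume1}); items (i)--(iii) and (v) come from the slc-trivial fibration machinery, while (iv) is established separately and does not follow from the definition of the discriminant alone.
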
 

For the details of Theorem \ref{f-thm1.1}, see \cite{fujino-hashizume2}. 
On the other hand, Hacon introduced a b-divisor, which is 
denoted by $\mathbf B(W; X, \Delta)$, and formulated 
his inversion of adjunction for log canonical centers of 
arbitrary codimension in \cite{hacon}. 
We note that the definition of $\mathbf B(W; X, \Delta)$ is different 
from our definition of $\mathbf B$ in Theorem \ref{f-thm1.1}. 
The goal of this paper is to prove that 
$\mathbf B(W; X, \Delta)=\mathbf{B}$ always holds.  
The following theorem is the main result of this paper. 

\begin{thm}\label{f-thm1.2}
Let $X$ be a normal variety and let $\Delta$ be an 
effective $\mathbb R$-divisor on $X$ such that 
$K_X+\Delta$ is $\mathbb R$-Cartier. Let 
$W$ be a log canonical center of $(X, \Delta)$. Then 
Hacon's $\mathbf B(W; X, \Delta)$ coincides with the 
$\mathbb R$-b-divisor $\mathbf B$ on $Z$ in 
Theorem \ref{f-thm1.1}, where 
$Z$ is the normalization of $W$. 
Hence our adjunction and inversion of adjunction for 
log canonical centers of arbitrary codimension 
completely generalizes Hacon's inversion of adjunction. 
\end{thm}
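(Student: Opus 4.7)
The strategy is to show that $\mathbf B(W; X, \Delta)$ and the b-divisor $\mathbf B$ of Theorem \ref{f-thm1.1} have the same trace on a sufficiently high common birational model of $Z$, which suffices because any b-divisor on $Z$ is determined by its traces. First, I would recall precisely the construction of $\mathbf B(W; X, \Delta)$ from \cite{hacon}: Hacon fixes a log resolution of $(X, \Delta)$ on which $W$ is realized as the image of an exceptional or boundary component, and then defines the coefficients of $\mathbf B(W; X, \Delta)$ along prime divisors over $Z$ by a restriction-type adjunction formula in terms of log discrepancies of $(X, \Delta)$, with the whole construction compatible with further blowups.

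Next, I would choose a projective birational morphism $f\colon Y\to X$ that dominates both the log resolution implicit in the construction of $\mathbf B(W; X, \Delta)$ and the model $Z'$ from Theorem \ref{f-thm1.1}. On $Y$ there is a prime divisor $S$ with $f(S)=W$ together with a factorization $S\to Z'\to Z\to W$, so that the $Z$-traces of both b-divisors are realized as divisors on $S$.

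On this common model I would compare two decompositions of $f^*(K_X+\Delta)|_S$. Theorem \ref{f-thm1.1} gives $f^*(K_X+\Delta)|_S=K_S+\Theta_S+N_S$, where $\Theta_S$ and $N_S$ are the $S$-traces of $\mathbf B$ and $\mathbf M$ respectively. Hacon's construction provides a parallel decomposition $K_S+\Theta'_S+N'_S$, whose boundary part pushes down to $\mathbf B(W; X, \Delta)_Z$. The coefficient of $\Theta_S$ along a prime divisor $D\subset S$ is of the form $1-a$, where $a$ is the log discrepancy of $(X,\Delta)$ along the divisor over $X$ determined by $D$; the same formula holds for $\Theta'_S$, and I would verify that both constructions extract precisely the same discrepancy.

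The main obstacle I anticipate is matching the moduli parts. The decomposition $\nu^*(K_X+\Delta)=\mathbf K_Z+\mathbf M_Z+\mathbf B_Z$ is not a priori unique, so equality of the sums on a common model is not enough by itself. To pin things down I would invoke the characterization of $\mathbf M$ in Theorem \ref{f-thm1.1} as a b-potentially nef $\mathbb R$-b-divisor descending to $Z'$, together with the support normalization of $\mathbf B$ recorded in (ii)--(v), and verify that Hacon's construction obeys the same normalizations. Once this is established, equality of the two adjunction decompositions on $S$ forces the boundary parts, and hence $\mathbf B$ and $\mathbf B(W; X, \Delta)$ themselves, to coincide.
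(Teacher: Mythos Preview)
Your proposal misses the essential content of the problem. By Definition \ref{f-def2.1}, both b-divisors are defined via log canonical thresholds $\alpha_{P,T}$: the coefficient of $P$ in $\mathbf B_{\tilde Z}$ is $1-\inf_T \alpha_{P,T}$, where $T$ ranges over all prime divisors over $X$ with $a(T,X,\Delta)=-1$ and center $W$, whereas in $\mathbf B(W;X,\Delta)_{\tilde Z}$ one fixes a single such $T$ and takes $1-\alpha_{P,T}$. The inequality $\mathbf B(W;X,\Delta)\le\mathbf B$ is thus tautological, and the theorem is exactly the assertion that $\alpha_{P,T}$ does not depend on $T$. Your plan to fix one $S$ over $W$ and compare two adjunction decompositions on $S$ cannot see this: the entire question is whether a \emph{different} choice of $S$ yields the same thresholds. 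Moreover, your description of the coefficients as ``$1-a$, where $a$ is the log discrepancy of $(X,\Delta)$ along the divisor determined by $D$'' does not match either construction, and Hacon does not package $\mathbf B(W;X,\Delta)$ as the boundary part of a moduli$+$boundary decomposition, so there is no $N'_S$ to pair against $\mathbf M$; the normalization argument you sketch has nothing to act on.

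The paper's argument is of a different nature. After reducing to the case where $W$ itself is normal (Lemma \ref{f-lem3.3}), one passes to a $\mathbb Q$-factorial dlt modification on which all candidate divisors $T$ appear simultaneously as components $V'_i$ of a divisor $V'$, and Lemma \ref{f-lem3.4} shows that $V'\to W$ has connected fibers. After cutting by general hyperplanes so that $W$ is a curve and $P$ a point, Lemma \ref{f-lem3.1} (proved via an MMP with scaling) shows that if one component $V'_{i_0}$ carries a log canonical center of $(V'_{i_0},\Delta_{V'_{i_0}}+c_P(f'_{i_0})^*P)$ over $P$, then every horizontal lc stratum meeting the same connected fiber does as well; propagating through the connected fiber of $V'$ forces $\alpha_{P,V'_i}=c_P$ for every $i$. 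This connectedness-plus-MMP mechanism is the heart of the proof and is entirely absent from your outline.
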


By Theorem \ref{f-thm1.2}, Hacon's inversion of 
adjunction for log canonical centers of arbitrary 
codimension in \cite{hacon} 
now becomes a very special case of 
Theorem \ref{f-thm1.1}. 
We think that the definition of $\mathbf B$ in \cite{fujino-hashizume1} 
and \cite{fujino-hashizume2} is 
more natural than Hacon's definition of $\mathbf B(W; X, \Delta)$ in 
\cite{hacon}. 
However, $\mathbf B(W; X, \Delta)$ seems to be 
easier to compute than $\mathbf B$. 
Hence Theorem \ref{f-thm1.2} is important and useful. 

\begin{rem}\label{f-rem1.3}
In \cite{hacon}, 
Hacon defined $\mathbf B(W; X, \Delta)$ under the extra assumption 
that $\Delta$ is a boundary $\mathbb Q$-divisor on $X$ 
such that $K_X+\Delta$ is $\mathbb Q$-Cartier. 
However, his definition works for 
effective $\mathbb R$-divisors $\Delta$ such that 
$K_X+\Delta$ is $\mathbb R$-Cartier 
without any modifications. 
By definition, it is obvious that $\mathbf B(W; X, \Delta)\leq \mathbf B$ 
always holds. 
\end{rem}

Let us quickly explain the proof of Theorem \ref{f-thm1.1} for 
the reader's convenience. First we take a suitable 
resolution of singularities of the pair $(X, \Delta)$. 
Next, by using the framework of quasi-log 
schemes (see \cite[Chapter 6]{fujino-foundations}), we construct 
a natural quasi-log scheme structure on $Z$ (see \cite{fujino-cone}). 
Then we apply the theory of basic $\mathbb R$-slc-trivial 
fibrations and obtain $\mathbf B$ and $\mathbf M$ 
satisfying (i), (ii), (iii), and (v) 
(see \cite{fujino-hashizume2}). 
Finally, we prove (iv) with the aid of the minimal model 
program for log canonical pairs (see \cite{fujino-hashizume1}). 
We strongly recommend the interested reader to see \cite{fujino-cone}, 
\cite{fujino-hashizume1}, and \cite{fujino-hashizume2}. 

In this paper, we will only use the minimal model program at 
the level of \cite{bchm}. 
We will freely use the standard notation and 
definitions of the minimal model 
program as in \cite{fujino-fundamental} and \cite{fujino-foundations} 
(see also \cite{fujino-cone}). 

\begin{ack}
The first author was partially 
supported by JSPS KAKENHI Grant Numbers 
JP19H01787, JP20H00111, 
JP21H00974. 
The second author was partially supported by JSPS KAKENHI Grant
Numbers JP16J05875, JP19J00046. 
\end{ack}

\section{$\mathbf B(W; X, \Delta)$ and $\mathbf B$}

Let us recall the definition of $\mathbf B(W; X, \Delta)$ and 
$\mathbf B$. 

\begin{defn}[$\mathbf B(W; X, \Delta)$ and $\mathbf B$]
\label{f-def2.1}
Let $X$ be a normal 
variety and let $\Delta$ be an $\mathbb R$-divisor 
on $X$ such that 
$K_X+\Delta$ is $\mathbb R$-Cartier and that 
$\Delta$ is effective in a neighborhood of the generic point of 
a closed subvariety $W$. 
Assume that $W$ is a log canonical center of $(X, \Delta)$. 
Let $\nu\colon Z\to W$ be the normalization of $W$. 
For any proper birational morphism $\rho\colon \tilde{Z} \to Z$ 
from a normal variety $\tilde Z$, 
we consider prime divisors $T$ over $X$ such 
that $a(T,X,\Delta)=-1$ and that the center of $T$ on $X$ is $W$. 
We take a suitable resolution $f\colon Y\to X$ 
with $K_Y+\Delta_Y=f^*(K_X+\Delta)$ so that 
$\Delta_Y$ is a simple normal crossing divisor on $Y$, 
$T$ is a prime divisor on $Y$, 
and the induced map $f_{T}\colon T\dashrightarrow \tilde{Z}$ is a morphism. 
We put $\Delta_{T}=(\Delta_{Y}-T)|_{T}$. 
For any prime divisor $P$ on $\tilde{Z}$, 
we shrink $\tilde Z$ and assume that $P$ is Cartier. 
Then we define a real number 
$\alpha_{P,T}$ by
$$\alpha_{P,T}
=\sup\{\lambda \in \mathbb R\,|\,(T,\Delta_T+\lambda f^*_TP) 
\ \text{is sub log canonical over the generic point of}\ P\}.$$ 
It is easy to see that $\alpha_{P, T}$ is independent 
of the resolution $f\colon Y\to X$ and is well-defined. 
The trace $\mathbf B_{\tilde{Z}}$ of $\mathbf B$ on $\tilde{Z}$ is defined by
$$\mathbf B_{\tilde{Z}}=\sum_{P}(1-\underset{T}\inf \alpha_{P,T})P$$
where $P$ runs over prime divisors on $\tilde Z$ and 
$T$ 
runs over prime divisors 
over $X$ such that $a(T,X,\Delta)=-1$ 
and that the center of $T$ on $X$ is $W$. 

We choose and fix 
one prime divisor $T$ over $X$ such that 
$a(T, X, \Delta)=-1$ and that the center of $T$ on $X$ is $W$. 
The trace $\mathbf B(W; X, \Delta)_{\tilde Z}$ 
of $\mathbf B(W; X, \Delta)$ is defined by 
$$
\mathbf B(W; X, \Delta)_{\tilde Z}=\sum _P (1-\alpha _{P, T})P
$$ 
where $P$ runs over prime divisors on $\tilde Z$. 
By definition, $\mathbf B(W; X, \Delta)\leq \mathbf B$ always 
holds. 
\end{defn}

\begin{rem}\label{f-rem2.2}
Although it is not obvious, we can check that 
$\mathbf B$ is a well-defined $\mathbb R$-b-divisor 
on $Z$. On the other hand, 
we can easily see that 
$\mathbf B (W; X, \Delta)$ is 
a well-defined $\mathbb R$-b-divisor 
on $Z$, but it is not clear whether 
$\mathbf B(W; X, \Delta)$ is 
independent of the choice of $T$ or not. 
In Theorem \ref{f-thm1.2}, we prove 
that $\mathbf B=\mathbf B(W; X, \Delta)$ holds. 
This implies that $\mathbf B(W; X, \Delta)$ is 
independent of the choice of $T$. 
Moreover, by the proof of Theorem \ref{f-thm1.2}, 
the well-definedness of $\mathbf B$ is clear. 
\end{rem}

Precisely speaking, Hacon claims that $\mathbf B(W; X, \Delta)$ 
is independent of the choice of the divisor $T$ without proof 
in \cite{hacon}. 
In this paper, we prove it in a slightly more general 
setting. 

\section{Proof of Theorem \ref{f-thm1.2}} 

In this section, we prove Theorem \ref{f-thm1.2}. 
Before the proof of Theorem \ref{f-thm1.2}, we prepare three lemmas.

\begin{lem}\label{f-lem3.1} 
Let $X$ be a normal variety and let $\Delta$ be an $\mathbb R$-divisor 
on $X$ such that $K_X+\Delta$ is 
$\mathbb R$-Cartier. Let $f\colon X\to Y$ be a projective 
surjective morphism onto a smooth curve $Y$ such that 
$K_X+\Delta\sim _{\mathbb R, f}0$. 
Let $P$ be a closed point of $Y$ such that 
$(X, \Delta)$ is divisorial log terminal over $Y\setminus P$. 
We take the log canonical threshold $b_P$ of $(X, \Delta)$ with respect to 
$f^*P$. 
Let $F$ be a connected component of $f^{-1}(P)$. 
Assume that $F$ contains a log canonical center of $(X, \Delta+b_Pf^*P)$. 
Let $S$ be an irreducible component of $\left( \Delta^h\right)^{=1}$, 
that is, $S$ is a codimension one 
log canonical center of $(X, \Delta)$ which is dominant 
onto $Y$ by $f$. 
Then $S\cap F$ always contains a log canonical 
center of $(X, \Delta+b_Pf^*P)$. 
Hence, if $\nu\colon S^\nu\to S$ is the normalization 
of $S$ and $\Delta_{S^\nu}$ is the 
$\mathbb R$-divisor on $S^\nu$ defined by 
$K_{S^\nu}+\Delta_{S^\nu}=\nu^*(K_X+\Delta)$, 
then $(S^{\nu}, \Delta_{S^{\nu}}+b_P\nu^{*}(f|_{S})^{*}P)$ 
has a log canonical center mapping to $P$.
\end{lem}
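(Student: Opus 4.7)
The plan is to prove the first (set-theoretic) assertion that $S \cap F$ contains a log canonical center of $(X, \Delta + b_P f^*P)$; the ``Hence'' clause then follows by standard adjunction along the codimension one lc center $S$, since pulling back gives $\nu^*(K_X + \Delta + b_P f^*P) = K_{S^\nu} + \Delta_{S^\nu} + b_P \nu^*(f|_S)^*P$, and any lc center of the ambient pair that is contained in $S$ and maps to $P$ corresponds to an lc center of the restricted pair mapping to $P$. To set up, I would shrink $Y$ to an affine neighborhood of $P$, so that $P$ becomes a principal Cartier divisor on $Y$ and hence $f^*P$ is principal on $X$; then $K_X + \Delta + b_P f^*P \sim_{\mathbb R} K_X + \Delta \sim_{\mathbb R, f} 0$, so $(X, \Delta + b_P f^*P)$ is a log canonical pair whose log canonical divisor is $\mathbb R$-linearly trivial over the curve.

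The main step is a Koll\'ar--Shokurov style connectedness argument applied to $f\colon X \to Y$ with respect to the lc pair $(X, \Delta + b_P f^*P)$. In the form due to Fujino that handles the case $K + \Delta \equiv_f 0$, the non-klt locus $\Nklt(X, \Delta + b_P f^*P) \cap f^{-1}(P)$ has at most two connected components. Since a connected subset of the disjoint union $f^{-1}(P) = \bigsqcup_i F_i$ is contained in a single $F_i$, this connectedness forces $S \cap f^{-1}(P)$ and any lc center contained in $F$ to lie in a common connected component of the fiber, and in particular $S \cap F \neq \emptyset$. Applying the fact that the set-theoretic intersection of two log canonical centers of an lc pair is a union of lc centers (Kawamata--Ambro) then produces the desired log canonical center inside $S \cap F$, since $S$ and the given lc center inside $F$ are two lc centers of $(X, \Delta + b_P f^*P)$ with nonempty intersection contained in $S \cap F$.

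The main obstacle is the connectedness step: the classical Koll\'ar--Shokurov lemma requires $-(K+\Delta)$ to be $f$-nef and $f$-big, while here one has only $\equiv_f 0$. Fujino's refinement does cover the $\equiv_f 0$ case but allows two connected components in a degenerate configuration, which would need to be handled by the special structure it forces on a neighborhood of $F$ (essentially a standard $\mathbb P^1$-bundle type configuration where the lemma is verified by hand). A natural technical simplification is to replace $(X, \Delta)$ by a dlt modification $\pi\colon (X', \Delta') \to (X, \Delta)$, so that all log canonical centers of $(X', \Delta' + b_P (f\circ\pi)^*P)$ are strata of its coefficient-one part; the strict transform of $S$ then carries the adjunction information, and the connectedness lemma applies cleanly in the dlt setting, after which pushing forward by $\pi$ produces the required lc center inside $S \cap F$.
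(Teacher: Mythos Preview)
There is a genuine gap in the Kawamata--Ambro step. Even granting that Koll\'ar--Shokurov connectedness (in its $\equiv_f 0$ form, applied over the Stein factorization so that $F$ becomes a full fibre) yields $S\cap F\neq\emptyset$, you then assert that $S$ and ``the given lc center inside $F$'' have nonempty intersection. That does not follow: the hypothesis only supplies \emph{some} log canonical center $C\subset F$, and connectedness of $\Nklt(X,\Delta+b_Pf^*P)$ near $F$ does not force $S\cap C\neq\emptyset$. The underlying problem is that $F$ is in general \emph{not} a union of lc centers of $(X,\Delta+b_Pf^*P)$: by definition of the threshold $b_P$ at least one component of $f^*P$ acquires coefficient $1$, but other irreducible components of $F$ may well have coefficient strictly less than $1$. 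After a dlt modification the lc centers are the strata of the coefficient-$1$ part, and the chain of such strata that connectedness produces inside $\Nklt\cap F$ can pass through \emph{other} horizontal components of $\Delta^{=1}$ without ever yielding a stratum contained in $S$. So neither the direct intersection argument nor a ``chain of lc centers'' reading of connectedness closes the gap.

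The paper deals with exactly this obstruction by an additional MMP. After the dlt blow-up (your suggested first reduction, which is correct but not sufficient), one perturbs by $\sum_i\varepsilon_iF_i$ on the components $F_i$ of $F$ with coefficient $<1$ and runs a $\bigl(K_X+(\Delta+b_Pf^*P)^{>0}+\sum_i\varepsilon_iF_i\bigr)$-MMP with scaling over the Stein factorization $\bar Y$. These components are very exceptional over $\bar Y$ in Birkar's sense, so the MMP contracts them; on the output $X'$ one has $(\Delta'+b_P(f')^*P)^{=1}\geq F'$, i.e.\ every surviving component of $F'$ has coefficient $1$. Now $F'$ \emph{is} a union of lc centers, $S'\cap F'\neq\emptyset$ automatically since $S'$ dominates $\bar Y$, and the dlt stratum argument gives the desired center in $S'\cap F'$, which one then transports back along the MMP. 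The two-component degenerate case of connectedness that you flag is a separate loose end, but the missing MMP step is the essential one.
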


\begin{proof} 
Without loss of generality, we may assume that 
$Y$ is quasi-projective by shrinking $Y$ around $P$. 
By replacing $\Delta$ with $\Delta+mf^*P$ for 
some sufficiently large positive integer $m$, 
we may assume that $\Delta$ is effective 
with $\Delta\geq f^*P$. 
In this situation, the log canonical threshold 
$b_{P}$ is a nonpositive number. 
We take a resolution of singularities of $X$ suitably and run a 
minimal model program with scaling of an ample divisor 
as in the proof of \cite[Theorem 3.9]{fujino-cone}. 
Then we have 
a dlt blow-up 
$g\colon Z\to X$ of $(X, \Delta)$ 
with $K_Z+\Delta_Z=g^*(K_X+\Delta)$ such that 
\begin{itemize}
\item $Z$ is $\mathbb Q$-factorial, 
\item $g$ is small over $Y\setminus P$, and 
\item the pair $\left(Z, {\Delta_Z^{<1}+\Supp \Delta_Z^{\geq 1}}\right)$ is 
divisorial log terminal. 
\end{itemize} 
For the details of the construction of $g\colon Z\to X$, 
see \cite[Theorem 3.9]{fujino-cone}. 
By replacing $f\colon (X, \Delta)\to Y$ and $F$ with 
$f\circ g\colon (Z, \Delta_Z)\to Y$ and $g^{-1}(F)$ respectively, 
we may assume that $X$ is $\mathbb Q$-factorial 
and $\left(X, {\Delta^{<1}+\Supp \Delta^{\geq 1}}\right)$ 
is divisorial log terminal. 
Hence $\left(X, (\Delta+b_Pf^*P)^{>0}\right)$ is a 
$\mathbb Q$-factorial divisorial log terminal pair. 
Note that 
$$K_X+(\Delta+b_Pf^*P)^{>0}
\sim _{\mathbb R, f}-(\Delta+b_Pf^*P)^{<0}\geq 0. 
$$ 
If $\Supp (\Delta+b_Pf^*P)^{=1}\supset F$, then it is obvious 
that $\Delta+b_Pf^*P$ is effective in a neighborhood of 
$F$ and that $S\cap F$ contains a log canonical center of $(X, \Delta+b_Pf^*P)$. 
Therefore, from now on, we assume that 
$\Supp (\Delta+b_Pf^*P)^{=1}\not \supset F$. 
Let $F=\sum _i F_i$ be the irreducible decomposition of $F$. 
If $F_i\not \subset \Supp (\Delta+b_Pf^*P)^{=1}$, 
then we take $0<\varepsilon _i\ll 1$. 
If $F_i\subset \Supp (\Delta+b_P f^*P)^{=1}$, 
then we put $\varepsilon _i=0$. 
Let $f\colon X\overset{h}{\longrightarrow} \bar Y\to Y$ 
be the Stein factorization.
Then $\left(X, (\Delta+b_Pf^*P)^{>0}
+\sum _i \varepsilon _i F_i\right)$ is a $\mathbb Q$-factorial divisorial 
log terminal pair with 
$$K_X+(\Delta+b_Pf^*P)^{>0}+\sum _i \varepsilon _i F_i
\sim _{\mathbb R, h} -(\Delta+b_Pf^*P)^{<0} 
+\sum _i \varepsilon _i F_i\geq 0. 
$$ 
We shrink $\bar Y$ around $h(F)$ and run 
a minimal model program of $K_X+(\Delta+b_Pf^*P)^{>0}+
\sum _i \varepsilon _i F_i$ over $\bar Y$ with scaling of an ample divisor. 
Then, after finitely many steps, 
we get $X'$ with the following commutative diagram: 
$$
\xymatrix{
X\ar[rd]_-h \ar@{-->}[rr]^-{\phi}&& X'\ar[ld]^-{h'} \\ 
& \bar Y& 
}
$$ 
such that $\Delta'+b_P(f')^*P$ is effective in a neighborhood of 
$F'$ and that 
$(\Delta'+b_P(f')^*P)^{=1}\geq F'$, where 
$f'\colon X'\to \bar Y$, $\Delta'=\phi_*\Delta$, and $F'=\phi_*F$. 
For the details of the above minimal model program, 
see \cite{fujino-semistable} (see also the techniques of very exceptional 
divisors discussed in \cite[Section 3]{birkar-flip}). 
This means that $S'\cap F'$ contains a log canonical center 
of $(X', \Delta'+b_P(f')^*P)$, where $S'=\phi_*S$ as usual. 
Hence there exists a prime divisor $E$ over 
$S'$ such that $a(E, S', \Delta_{S'}+b_P(f'|_{S'})^*P)=-1$ and $E$ maps to $P$, 
where $K_{S'}+\Delta_{S'}=(K_{X'}+\Delta')|_{S'}$. 
By the construction of $\phi\colon X\dashrightarrow X'$, we have 
$a(E,S, \Delta_{S}+b_P(f|_{S})^*P)=-1$. 
We note that $(X, \Delta^{<1}+\Supp \Delta^{\geq 1})$ is divisorial 
log terminal. 
Hence $S\cap F$ always contains a log canonical center of $(X, \Delta 
+b_Pf^*P)$.
\end{proof}

\begin{rem}\label{f-rem3.2}
In Lemma \ref{f-lem3.1}, we assume that $F$ contains no log canonical 
center of $(X, \Delta+b_Pf^*P)$. Then $(X, \Delta+(b_P+\varepsilon)f^*P)$ 
is log canonical in a neighborhood of $F$ for $0<\varepsilon \ll 1$. 
In this situation, $(S^\nu, \Delta_{S^\nu}+(b_P+\varepsilon) 
\nu^*(f|_S)^*P)$ is log canonical by adjunction. 
Hence $(S^\nu, \Delta_{S^\nu}+b_P 
\nu^*(f|_S)^*P)$ has no log canonical center 
mapping to $P$. 
\end{rem}

\begin{lem}\label{f-lem3.3}
Let $X$ be a normal quasi-projective variety and let $W$ be a 
closed subvariety 
of $X$. 
Let $\varphi\colon \widetilde W\to W$ be a projective birational morphism 
from a normal variety $\widetilde W$. 
Then we can construct a projective birational morphism 
$\psi\colon \widetilde X\to X$ from a normal 
variety $\widetilde X$ such that $\psi$ is an isomorphism 
over the generic point of $W$ with 
the following commutative diagram: 
\begin{equation}\label{f-eq3.1}
\xymatrix{
\widetilde W \ar@{^{(}->}^-{\widetilde
\iota}[d]\ar[r]^-{\varphi} & W\ar@{^{(}->}^-{\iota}[d] \\
\widetilde X \ar[r]_-{\psi}& X, 
}
\end{equation}
where $\iota$ and $\widetilde \iota$ are closed embeddings. 
\end{lem}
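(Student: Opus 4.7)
The plan is to realize $\widetilde{X}$ as the normalization of a blow-up of $X$ along an ideal sheaf that lifts an ideal sheaf encoding $\varphi$, and then to use Zariski's Main Theorem to promote the embedding of $\widetilde{W}$ in that blow-up to an embedding in its normalization.

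Since $W$ is a closed subvariety of the quasi-projective variety $X$, it is itself quasi-projective; by the standard characterization of projective birational morphisms of quasi-projective schemes, we may write $\varphi \colon \widetilde{W} \to W$ as the blow-up of $W$ along a coherent ideal sheaf $\mathcal{I} \subset \mathcal{O}_W$ with $V(\mathcal{I}) \subsetneq W$. Next, I would lift $\mathcal{I}$ to an ideal sheaf $\mathcal{J} \subset \mathcal{O}_X$ by pulling back along the surjection $\mathcal{O}_X \twoheadrightarrow \iota_{*}\mathcal{O}_W$, so that $\mathcal{J} \supset \mathcal{I}_W$ and $\mathcal{J} \cdot \mathcal{O}_W = \mathcal{I}$. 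Let $\sigma \colon X^* = \mathrm{Bl}_{\mathcal{J}}(X) \to X$ be the corresponding blow-up. Then $\sigma$ is projective and birational, an isomorphism over $X \setminus V(\mathcal{J})$, and in particular an isomorphism over the generic point of $W$. Moreover, by the standard compatibility of strict transforms with blow-ups, the strict transform of $W$ in $X^*$ is $\mathrm{Bl}_{\mathcal{J} \cdot \mathcal{O}_W}(W) = \mathrm{Bl}_{\mathcal{I}}(W) \cong \widetilde{W}$, which therefore embeds as a closed subvariety of $X^*$ compatibly with $\varphi$.

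Now let $\nu \colon \widetilde{X} \to X^*$ be the normalization of $X^*$ and set $\psi = \sigma \circ \nu$. Then $\widetilde{X}$ is a normal variety, $\psi$ is projective and birational, and $\psi$ remains an isomorphism over the generic point of $W$, since $\mathcal{O}_{X, \eta_W}$ is already normal and hence $\nu$ is an isomorphism there. To obtain the closed embedding $\widetilde{\iota} \colon \widetilde{W} \hookrightarrow \widetilde{X}$, I would consider the strict transform $\widetilde{W}' \subset \widetilde{X}$ of the closed subvariety $\widetilde{W} \subset X^*$ under $\nu$, which is an irreducible closed subvariety of $\widetilde{X}$. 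The induced morphism $\widetilde{W}' \to \widetilde{W}$ is proper and birational; since $\nu$ is finite, it is also quasi-finite, and therefore finite. Because $\widetilde{W}$ is normal, Zariski's Main Theorem implies that a finite birational morphism onto it is an isomorphism, so $\widetilde{W}' \cong \widetilde{W}$ and we obtain the desired closed embedding. Commutativity of the diagram is then immediate from the construction.

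The main delicate point is this last step: a priori, the preimage $\nu^{-1}(\widetilde{W}) \subset \widetilde{X}$ could split into several components or pick up a nontrivial extension of residue fields over the generic point of $\widetilde{W}$. The normality of $\widetilde{W}$, exploited via Zariski's Main Theorem, is precisely what rules this out and ensures that the strict transform of $\widetilde{W}$ in $\widetilde{X}$ is $\widetilde{W}$ itself.
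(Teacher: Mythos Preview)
Your proposal is correct and follows essentially the same route as the paper: both realize $\varphi$ as a blow-up along an ideal $\mathcal I$ on $W$, lift it to $\mathcal J=\mathrm{Ker}(\mathcal O_X\to \mathcal O_W/\mathcal I)$ on $X$, blow up $\mathcal J$, and then normalize. The only difference is cosmetic: where the paper asserts in one line that the normal variety $\widetilde W$ factors through the normalization $\widetilde X$, you spell this out via the strict transform and Zariski's Main Theorem, which is exactly the content behind that assertion.
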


\begin{proof}
By \cite[Chapter II, Theorem 7.17]{hartshorne}, 
there exists a coherent ideal sheaf $\mathcal I$ on $W$ such that 
$\varphi\colon \widetilde W\to W$ corresponds to the blow-up 
of $\mathcal I$. 
We put 
$\mathcal J=
\mathrm{Ker} (\mathcal O_X\to \mathcal O_W\to \mathcal O_W/\mathcal I)$. 
Then $\mathcal J$ is a coherent ideal sheaf on $X$. 
Let $\psi'\colon X'\to X$ be the blow-up of $\mathcal J$. 
Then we obtain the following commutative diagram 
by \cite[Chapter II, Corollary 7.15]{hartshorne}. 
$$
\xymatrix{
\widetilde W \ar@{^{(}->}^-{\iota'}[d]
\ar[r]^-{\varphi} & W\ar@{^{(}->}^-{\iota}[d] \\
X' \ar[r]_-{\psi'}& X 
}
$$ 
By construction, $\psi'$ is an isomorphism 
over the generic point of $W$. 
Let $\nu\colon \widetilde X\to X'$ be the normalization of $X'$. 
Since $\widetilde W$ is normal by assumption, 
$\widetilde W\to X'$ factors through $\widetilde X$. 
Then we get the desired diagram \eqref{f-eq3.1} such that 
$\widetilde W\to \widetilde X$ is a closed embedding and that 
$\psi\colon \widetilde X\to X$ is an isomorphism 
over the generic point of $W$. 
\end{proof}

\begin{lem}\label{f-lem3.4}
Let $(Y, \Delta_Y)$ be a $\mathbb Q$-factorial divisorial log terminal 
pair and let $f\colon Y\to X$ be a projective 
morphism with $f_*\mathcal O_Y\simeq \mathcal O_X$ and $K_Y+\Delta_Y\sim 
_{\mathbb R, f}0$. 
Let $V$ be a reduced divisor on $Y$ such that 
$V\leq \Delta^{=1}_Y$ and $f(V_i)$ is independent of $i\in I$, 
where $V=\sum _{i\in I}V_i$ is the irreducible decomposition of $V$. 
We set $W=f(V_{i})$.
Suppose that no log canonical center of $(Y, \Delta_Y-V)$ maps to 
$W$ by $f$. 
Then $f_*\mathcal O_V\simeq \mathcal O_W$ holds. 
\end{lem}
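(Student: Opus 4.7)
The plan is a Kollár--Shokurov style connectedness argument. I would start with the defining short exact sequence
$$0 \to \mathcal{O}_Y(-V) \to \mathcal{O}_Y \to \mathcal{O}_V \to 0,$$
push it forward by $f$, and use a torsion-freeness theorem to kill the obstruction living in $R^1 f_*$. Since $f_*\mathcal{O}_Y=\mathcal{O}_X$, the cokernel $\mathcal{C}$ of the induced map $\mathcal{O}_X\to f_*\mathcal{O}_V$ injects into $R^1 f_*\mathcal{O}_Y(-V)$ and is supported on $W$.

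The crucial input will be Fujino's torsion-freeness theorem applied to $\mathcal{O}_Y(-V)$. From $K_Y+\Delta_Y\sim_{\mathbb{R},f}0$ one obtains
$$-V \sim_{\mathbb{R},f} K_Y+(\Delta_Y-V),$$
and $(Y,\Delta_Y-V)$ is dlt because $V\leq \Delta_Y^{=1}$, its lc centers being precisely those lc centers of $(Y,\Delta_Y)$ not contained in $V$. Fujino's torsion-freeness (the vanishing-type input underlying the theory of quasi-log schemes) then ensures that every associated prime of $R^1 f_*\mathcal{O}_Y(-V)$ is the generic point of the $f$-image of some lc center of $(Y,\Delta_Y-V)$. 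The hypothesis that no such lc center maps into $W$ translates into the statement that no associated prime of $R^1 f_*\mathcal{O}_Y(-V)$---and a fortiori none of its subsheaf $\mathcal{C}$---lies in $W$. Since $\Supp(\mathcal{C})\subseteq W$ forces every associated prime of $\mathcal{C}$ to lie in $W$, we conclude that $\mathcal{C}$ has no associated primes, so $\mathcal{C}=0$.

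It then follows that $\mathcal{O}_X\to f_*\mathcal{O}_V$ is surjective; letting $\mathcal{J}$ denote its kernel, since $V$ is reduced, the scheme-theoretic image of $V$ under $f$ is a reduced closed subscheme of $X$ with underlying set $W$, so $\mathcal{J}$ is the radical ideal of $W$ and $f_*\mathcal{O}_V\simeq\mathcal{O}_W$. The main obstacle I anticipate is justifying Fujino's torsion-freeness in exactly the form needed: $V$ is only $\mathbb{Q}$-Cartier in the $\mathbb{Q}$-factorial dlt setting, so $\mathcal{O}_Y(-V)$ is a priori a reflexive sheaf rather than a line bundle, and classical vanishing statements assume a Cartier divisor. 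The cleanest remedy is to invoke the ideal-sheaf version of torsion-freeness for unions of lc strata built into Fujino's quasi-log machinery, or alternatively to pass to a log resolution of $(Y,\Delta_Y)$ on which $V$ becomes an snc divisor and then descend using $f_*\mathcal{O}_Y=\mathcal{O}_X$.
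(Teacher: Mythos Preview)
Your proposal is correct and follows essentially the same Koll\'ar--Shokurov connectedness argument as the paper: push forward the ideal sequence of $V$, then use Fujino's torsion-freeness to show that $R^1f_*\mathcal O_Y(-V)$ has no associated prime in $W$, so the connecting map vanishes and $\mathcal O_X\to f_*\mathcal O_V$ is surjective. The only difference is that the paper carries out explicitly the log-resolution remedy you flag at the end: it replaces $Y$ by a smooth $Z$ with snc boundary, applies Reid--Fukuda vanishing to identify $R^1f_*\mathcal O_Y(-V)$ with $R^1(f\circ g)_*\mathcal O_Z(-V_Z+\lceil E\rceil)$, and then invokes the torsion-freeness theorem in its standard Cartier form on $Z$.
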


\begin{proof}
We can take a projective birational morphism 
$g\colon Z\to Y$ from a smooth variety $Z$ such that 
$g$ is an isomorphism over the generic point of any 
log canonical center of $(Y, \Delta_Y)$ and 
that $\Exc(g)\cup \Supp g^{-1}_*\Delta_Y$ is a simple normal 
crossing divisor on $Z$. 
Then we can write 
$
K_Z+\Delta_Z=g^*(K_Y+\Delta_Y)+E
$ 
with $\Delta_Z=g^{-1}_*\Delta_Y$ such that 
$\lceil E\rceil$ is effective and $g$-exceptional. 
Let $V_Z$ be the strict transform of $V$ on $Z$. 
We have $-V_Z+\lceil E\rceil\sim
_{\mathbb R, f\circ g}K_Z+\Delta_Z-V_Z+\{-E\}$. 
By the vanishing theorem of Reid--Fukuda type (see 
\cite[Theorem 3.2.11]{fujino-foundations}), 
$R^ig_*\mathcal O_Z(-V_Z+\lceil E\rceil)=0$ for every $i>0$. 
We note that $g_*\mathcal O_Z(-V_Z+\lceil E\rceil)\simeq 
\mathcal O_Y(-V)$ holds since $\lceil E\rceil$ is effective 
and $g$-exceptional. We consider 
the following long exact sequence: 
\begin{equation*}
0 \longrightarrow f_*\mathcal{O}_{Y}(- V) 
\longrightarrow f_*\mathcal{O}_{Y} \longrightarrow f_*\mathcal O_{V} 
\overset{\delta}{\longrightarrow} 
R^{1}f_*\mathcal{O}_{Y}(- V) 
\longrightarrow \cdots. 
\end{equation*}
By \cite[Theorem 6.3 (i)]{fujino-fundamental} 
(see also \cite[Theorem 3.16.3 (i)]{fujino-foundations}), 
there exists no associated prime of $R^1f_*\mathcal O_Y(-V)\simeq 
R^1(f\circ g)_*\mathcal O_Z(-V_Z+\lceil E\rceil)$ 
in $W=f(V)$. 
Hence the above connecting homomorphism $\delta$ is zero. 
Therefore, $\mathcal O_X\simeq f_*\mathcal O_Y\to f_*\mathcal O_V$ 
is surjective. Thus the natural map $\mathcal O_W\to f_*\mathcal O_V$ 
is an isomorphism. 
\end{proof}

Let us start the proof of Theorem \ref{f-thm1.2}. 

\begin{proof}[Proof of Theorem \ref{f-thm1.2}]
We will prove $\mathbf B=\mathbf B(W; X, \Delta)$ under 
a slightly weaker assumption that 
$\Delta$ is only effective in a neighborhood of the 
generic point of $W$. 
\setcounter{step}{0}
\begin{step}\label{f-step1}
We take an arbitrary projective birational morphism 
$\tilde Z\to W$ from a normal variety $\tilde Z$. 
We have to prove $\mathbf B_{\tilde Z}=\mathbf B(W; X, \Delta)_{\tilde Z}$. 
By taking an affine open cover of $X$, 
we may assume that $X$ is quasi-projective. 
By applying Lemma \ref{f-lem3.3} to $\tilde Z\to W$ and $W\hookrightarrow 
X$, 
we get the following commutative diagram: 
\begin{equation*}
\xymatrix{
\tilde Z \ar@{^{(}->}
[d]\ar[r]& W\ar@{^{(}->}[d] \\
\widetilde X \ar[r]_-{\psi}& X, 
}
\end{equation*}
where $\psi\colon \widetilde X\to X$ is a projective 
birational morphism from a normal variety such that 
$\psi$ is an isomorphism over the generic point of $W$. 
We put $K_{\widetilde X}+\widetilde \Delta=\psi^*(K_X+\Delta)$. 
Since $\psi$ is an isomorphism over the generic point of $W$, 
$\widetilde \Delta$ is effective in a neighborhood of the 
generic point of $\tilde Z$ and 
$\tilde Z$ is a log canonical center of $(\widetilde X, \widetilde \Delta)$. 
By replacing $(X, \Delta)$ and $W$ with 
$(\widetilde X, \widetilde \Delta)$ and $\tilde Z$, respectively, 
we may further assume that $W$ is normal. 
By this reduction, all we have to do is to prove 
$\mathbf B_W=\mathbf B(W; X, \Delta)_W$. 
\end{step} 
\begin{step}\label{f-step2}
We take an effective Cartier divisor $D$ on $X$ such that 
$W\not\subset \Supp D$ and $\Supp \Delta^{<0}\subset 
\Supp D$. 
We consider the pair $(X, \Delta+mD)$ for some sufficiently large positive 
integer $m$ such that $\Delta+mD$ is effective. 
We take a projective birational morphism $f\colon Y\to X$ from a 
smooth quasi-projective variety $Y$ such that 
$f^{-1}(W)$ and $\Exc(f)$ are divisors on $Y$ such that 
the union of $f^{-1}(W)$, $\Exc(f)$, $\Supp f^{-1}_*\Delta$, and $\Supp 
f^{-1}_*D$ is contained in a simple normal crossing divisor. 
We put $K_Y+\Delta_Y=f^*(K_X+\Delta)$ and 
$K_Y+\Delta_Y+mf^*D=f^*(K_X+\Delta+mD)$. 
We define $V=\sum _{i\in I}V_i$, where 
$V_i$ runs over components of $\Delta^{=1}_Y$ with $f(V_i)=W$. 
By construction, there exists no log canonical center 
of $(Y, \Delta_Y-V)$ mapping to $W$ over a 
neighborhood of the generic point of $W$. 
Let $T$ be the prime divisor over $X$ 
which was chosen in order to define $\mathbf B(W; X, \Delta)$. 
Let $S$ be any prime divisor over $X$ such that 
$a(S, X, \Delta)=-1$ and that the center of $S$ on $X$ is $W$. 
We may assume that $S$ and $T$ 
are components of $V$ by taking $f\colon Y\to X$ suitably. 
To prove Theorem \ref{f-thm1.2}, it is sufficient 
to check that $\alpha_{P, V_i}$ is independent of the choice of $i \in I$. 

\end{step}
\begin{step}\label{f-step3} 
By running a minimal model program 
with scaling of an ample divisor 
as in the proof of \cite[Theorem 3.9]{fujino-cone}, 
we get a dlt blow-up 
$f'\colon Y'\to X$ of $(X, \Delta+mD)$ with the following 
commutative diagram. 
$$
\xymatrix{
Y \ar[dr]_-{f}\ar@{-->}^-\phi[rr]&& Y'\ar[dl]^-{f'} \\
& X &
}
$$
We put $K_{Y'}+\Delta_{Y'}=(f')^*(K_X+\Delta)$. 
By construction, $Y'$ is $\mathbb Q$-factorial and 
$$\left(Y', \left(\Delta_{Y'}+(f')^*mD\right)^{<1}+\Supp 
\left(\Delta_{Y'}+(f')^*mD\right)^{\geq 1}\right)$$ is divisorial log terminal. 
For the details, see \cite[Theorem 3.9]{fujino-cone}. 
Therefore, $f'\colon (Y', \Delta_{Y'})\to (X, \Delta)$ 
is a dlt blow-up over a neighborhood of 
the generic point of $W$. 
By construction again, $\phi$ does not contract any components of $V$. 
We put $V'=\phi_*V$ and $V'_i=\phi_*V_i$ for every $i\in I$. 
We note that $(Y', V')$ is divisorial log terminal 
since $V'\leq \Supp\left(\Delta_{Y'}+(f')^*mD\right)^{\geq 1}$. 
In particular, $V'_i$ is normal for every $i\in I$. 
We can take a 
Zariski open neighborhood $U$ of the generic point of $W$ 
over which $f'\colon (Y', \Delta_{Y'})\to 
(X, \Delta)$ is a dlt blow-up and 
no log canonical center of $(Y', \Delta_{Y'}-V')$ maps to $W$ by $f'$. 
By applying Lemma \ref{f-lem3.4} to $f'\colon (Y', 
\Delta_{Y'})|_{(f')^{-1}(U)}\to (f')^{-1}(U)$, we obtain that 
the natural map 
$\mathcal O_W\to 
f'_*\mathcal O_{V'}$ 
is an isomorphism on $U$. 
On the other hand, 
since every irreducible component of $V'$ is 
dominant onto $W$, we see that 
$\Spec_{W}f'_*\mathcal O_{V'}$ is a variety. 
Therefore, since $W$ is normal, the finite birational morphism 
$\Spec_{W}f'_*\mathcal O_{V'}\to W$ is an isomorphism by Zariski's 
main theorem. 
This implies that $f'\colon V'\to W$ has connected fibers. 
We put $K_{V'_i}+\Delta_{V'_i}=(K_{Y'}+\Delta_{Y'})|_{V'_i}$ and 
$f'_i=f'|_{V'_i}\colon V'_i\to W$ for every $i\in I$. 
We shrink $W$ and assume that $P$ is Cartier. 
Then we set 
$$\alpha_{P, V'_i}
=\sup\{\lambda \in \mathbb R\,|\,(V'_i,\Delta_{V'_i}+\lambda (f'_i)^*P) 
\ \text{is sub log canonical over the generic point of}\ P\}.$$ 
It is easy to see that $\alpha_{P, V_i}=\alpha_{P, V'_i}$ holds 
for every $i\in I$. 
Therefore, 
to prove that $\alpha_{P, V_i}$ is independent 
of the choice of $i \in I$, it is sufficient to prove that 
$\alpha_{P, V'_i}$ is independent of the choice of $i \in I$. 
\end{step}

\begin{step}\label{f-step4} 
We take a prime divisor $P$ on $W$. 
By cutting down $X$ by general hyperplanes, 
we assume that $W$ is a smooth curve and $P$ is a closed 
point. 
By shrinking $X$ suitably around $P$, 
$(V'_i, \Delta_{V'_i})$ is divisorial log terminal 
over $W\setminus P$ for every $i\in I$. 
We put 
$$
c_P=\sup \{\lambda \in \mathbb R\, |\, (V'_i, \Delta_{V'_i}+
\lambda (f'_i)^*P) \ 
\text{is sub log canonical for every $i\in I$}\}. 
$$ 
By definition, there exists $i_0\in I$ such that $\alpha_{P, V'_{i_0}}=c_P$ holds. 
From now on, we will prove that 
$\alpha_{P, V'_i}=c_P$ holds for every $i\in I$. 
If $\# I=1$, then there is nothing to prove. 
Hence we may assume that $\# I\geq 2$. 
To obtain $\alpha_{P, V'_i}=c_P$ for every $i\in I$, it is sufficient 
to prove the following claim. 
\begin{claim}\label{f-claim}
Let $F$ be any connected component of $f^{-1}_i(P)$ and 
let $B$ be any irreducible component of $(\Delta^h_{V'_i})^{=1}$ for 
some $i\in I$. Then $B\cap F$ contains a log canonical 
center of $(V'_i, \Delta_{V'_i}+c_P(f'_i)^*P)$. 
\end{claim}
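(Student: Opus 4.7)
The plan is to prove the Claim by propagating log canonical centers along the connected fiber $V'\cap(f')^{-1}(P)$, with Lemma~\ref{f-lem3.1} as the key tool. Each pair $(V'_j,\Delta_{V'_j})\to W$ satisfies the hypotheses of that lemma: $V'_j$ is normal by Step~\ref{f-step3}, we have $K_{V'_j}+\Delta_{V'_j}=(K_{Y'}+\Delta_{Y'})|_{V'_j}\sim_{\mathbb{R},f'_j}0$, the map $f'_j$ is projective onto the smooth curve $W$, and $(V'_j,\Delta_{V'_j})$ is divisorial log terminal over $W\setminus P$ by Step~\ref{f-step4}.

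First I will exploit the connectedness of $V'\cap(f')^{-1}(P)$, established in Step~\ref{f-step3} via the isomorphism $f'_*\mathcal{O}_{V'}\simeq\mathcal{O}_W$ coming from Lemma~\ref{f-lem3.4}. Consequently, the dual graph $G$ whose vertices are pairs $(V'_j,F)$ (with $F$ a connected component of $(f'_j)^{-1}(P)$) and whose edges arise from non-empty intersections $F\cap F'\neq\emptyset$ is connected.

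Next I would propagate log canonical centers along $G$, starting from $(V'_{i_0},F_{i_0})$, which already contains a log canonical center of $(V'_{i_0},\Delta_{V'_{i_0}}+c_P(f'_{i_0})^*P)$ because $\alpha_{P,V'_{i_0}}=c_P$. Given an edge to $(V'_j,F')$ arising from an irreducible component $S$ of $V'_i\cap V'_j$ which is horizontal over $W$, the component $S$ appears as a $=1$ component of $\Delta_{V'_i}$ by dlt adjunction from $(Y',V')$; Lemma~\ref{f-lem3.1} applied to $(V'_i,\Delta_{V'_i})\to W$ with $b_P=c_P$ then supplies a log canonical center of $(V'_i,\Delta_{V'_i}+c_P(f'_i)^*P)$ inside $S\cap F$. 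Since $S\subset V'_j$, dlt adjunction from the ambient pair $(Y',\Delta_{Y'}+c_P(f')^*P)$ transfers this to a log canonical center of $(V'_j,\Delta_{V'_j}+c_P(f'_j)^*P)$ contained in $F'$. Hence $\alpha_{P,V'_j}=c_P$, and the iteration continues from the new vertex. After traversing all of $G$, every vertex will contain a log canonical center, and a final application of Lemma~\ref{f-lem3.1} at each vertex $(V'_i,F)$ with any horizontal $=1$ component $B$ of $\Delta_{V'_i}$ will yield that $B\cap F$ contains the required log canonical center.

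The main obstacle will be carrying out the propagation step uniformly over every edge of $G$. Horizontal intersection components are handled directly by Lemma~\ref{f-lem3.1} as above, but edges arising purely from vertical components of $V'_i\cap V'_j$ lying over $P$ require separate treatment: such a vertical component $T$ is contained in both $F$ and $F'$ and is itself a codimension two log canonical center of the dlt pair $(Y',\Delta_{Y'})$, so dlt adjunction from $(Y',\Delta_{Y'}+c_P(f')^*P)$ to both $V'_i$ and $V'_j$ should supply compatible log canonical centers on each side, allowing the iteration to proceed in this case as well.
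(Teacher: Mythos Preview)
Your overall strategy matches the paper's: start at $(V'_{i_0},F_{i_0})$, propagate log canonical centers through the connected fiber $(f'|_{V'})^{-1}(P)$ using Lemma~\ref{f-lem3.1}, and finish with one more application of Lemma~\ref{f-lem3.1} to catch an arbitrary horizontal $B$. The graph formalism is a reasonable way to organize the induction.

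There is, however, a genuine gap in how you treat the edges. In both the horizontal and the vertical case you invoke ``dlt adjunction from the ambient pair $(Y',\Delta_{Y'}+c_P(f')^*P)$'', but that pair is \emph{not} known to be dlt, or even log canonical, anywhere near the fiber over $P$: away from the generic point of $W$ the divisor $\Delta_{Y'}$ may well have coefficients exceeding $1$, and adding $c_P(f')^*P$ does not repair this. For a vertical component $T\subset V'_i\cap V'_j$ the situation is worse: the coefficient of $T$ in $\Delta_{V'_j}+c_P(f'_j)^*P$ is $1+c_P\cdot\mathrm{mult}_T(f'_j)^*P$, which is strictly less than $1$ whenever $c_P<0$, so $T$ is not a log canonical center of the modified pair and there is nothing to transfer. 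Your tentative ``should supply'' does not go through.

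The paper sidesteps this by propagating \emph{only along horizontal} components $A$ of $V'_j\cap V'_k$. Two points make this work. First, because every $V'_j$ dominates $W$ and $f'\colon V'\to W$ has connected fibers, the general fiber of $V'$ over $W$ is already connected; hence for every $j$ there is some $k\neq j$ and an irreducible component $A$ of $V'_j\cap V'_k$ that is dominant onto $W$. Second, the transfer across such an $A$ is the symmetric statement that $A\cap G_j$ contains a log canonical center of $(V'_j,\Delta_{V'_j}+c_P(f'_j)^*P)$ if and only if $A\cap G_k$ contains one of $(V'_k,\Delta_{V'_k}+c_P(f'_k)^*P)$: both conditions are equivalent, via the last sentence of Lemma~\ref{f-lem3.1} and Remark~\ref{f-rem3.2}, to $(A^\nu,\Delta_{A^\nu}+c_P\nu^*(f'|_A)^*P)$ having a log canonical center over the relevant piece of the fiber, and $\Delta_{A^\nu}$ is the same whether one restricts from $V'_j$ or from $V'_k$ since both equal $(K_{Y'}+\Delta_{Y'})|_{A^\nu}$. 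No appeal to dlt adjunction on $(Y',\Delta_{Y'}+c_P(f')^*P)$ is needed. Rewriting your propagation step in this way---horizontal $A$'s together with the if-and-only-if through $(A^\nu,\Delta_{A^\nu})$, combined with the connectedness of $(f'|_{V'})^{-1}(P)$---closes the gap.
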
 
\begin{proof}[Proof of Claim]
We note that for any $j\in I$ there exists 
some $k\in I$ with $k\ne j$ such that 
$V'_j\cap V'_k\ne \emptyset$ and that 
some irreducible component of $V'_j\cap V'_k$ is dominant 
onto $W$ by $f'$ since $f'\colon V'\to W$ has connected 
fibers and every irreducible component 
of $V'$ is dominant onto $W$ by $f'$. 
We take an irreducible component $A$ of $V'_j\cap V'_k$ with 
$j\ne k$ such that 
$A$ is dominant onto $W$ by $f'$. 
We note that $A$ is an irreducible component of 
$(\Delta^h_{V'_j})^{=1}$ and $(\Delta^h_{V'_k})^{=1}$ by adjunction. 
Let $G_j$ be a connected component of $(f'_j)^{-1}(P)$. 
Then $A\cap G_j$ contains a log canonical center of 
$(V'_j, \Delta_{V'_j}+c_P(f'_j)^*P)$ if and only if $A\cap G_k$ 
contains a log canonical center of 
$(V'_k, \Delta_{V'_k}+c_P(f'_k)^*P)$, where $G_k$ is the 
connected component of $(f'_k)^{-1}(P)$ containing $A\cap G_j$. 
We first apply Lemma \ref{f-lem3.1} to $f'_{i_0}\colon (V'_{i_0}, 
\Delta_{V'_{i_0}}+c_P(f'_{i_0})^*P)\to 
W$ and then use the connectedness of $(f'|_{V'})^{-1}(P)$. 
By repeating this argument, 
we finally obtain that $B\cap F$ always contains a log canonical 
center of $(V'_i, \Delta_{V'_i}+c_P(f'_i)^*P)$. 
\end{proof}
As we mentioned above, we see that $\alpha_{P, V'_i}$ is 
independent of $i\in I$. 
This is what we wanted. 
\end{step}

The above arguments show that $\mathbf B_W
=\mathbf B(W; X, \Delta)_W$ holds. 
We finish the proof of Theorem \ref{f-thm1.2}. 
\end{proof}

\begin{rem}\label{f-rem3.5}
In Step \ref{f-step3} in the proof of Theorem \ref{f-thm1.2}, 
we proved that $f'\colon V'\to W$ has 
connected fibers. 
Note that $W$ is normal by the reduction argument 
in Step \ref{f-step1} in the proof of Theorem \ref{f-thm1.2}. 
However, it is not clear whether $f'\colon V'_i\to W$ has connected 
fibers or not. 
Hence we need a somewhat artificial formulation in 
Lemma \ref{f-lem3.1}. 
\end{rem}
%%%%%%%%%%%%%%%

\end{document}